\newcommand{\pol}[1]{\mathfrak{b}_{#1}}
\newcommand{\Wind}{\mathfrak{i}_0}
\DeclareMathOperator{\rad}{rad}
\DeclareMathOperator{\car}{char}
\DeclareMathOperator{\Cor}{Cor}
\DeclareMathOperator{\Trd}{Trd}
\DeclareMathOperator{\Nrd}{Nrd}
\DeclareMathOperator{\Br}{Br}
\newcommand{\id}{\mathsf{id}}
\numberwithin{equation}{section}
\newtheorem{thm}[equation]{Theorem}
\newtheorem*{thm*}{Theorem}
\newtheorem{lem}[equation]{Lemma}
\theoremstyle{definition}
\newtheorem*{rems}{Remarks}
\theoremstyle{plain}
\title[Transfer over quadratic extensions]{Transfer
  of quadratic forms and of quaternion algebras over
  quadratic field extensions}  
\author[K.J. Becher]{Karim Johannes Becher}
\author[N. Grenier-Boley]{Nicolas Grenier-Boley} 
\author[J.-P. Tignol]{Jean-Pierre Tignol}
\address{Universiteit Antwerpen, Departement Wiskunde en Informatica, Middelheimlaan~1, B-2020 Antwerpen, Belgium}
\email{KarimJohannes.Becher@uantwerpen.be}
\address{Universit\'e de Rouen Normandie, Laboratoire de
  Didactique Andr\'e Revuz\linebreak (LDAR-EA 4434), F-76130 Mont-Saint-Aignan, France.}
\email{nicolas.grenier-boley@univ-rouen.fr}
\address{Universit\'e catholique de Louvain, ICTEAM Institute, Avenue
  G.~Lema\^{\i}tre 4, Box~L4.05.01,
B-1348 Louvain-La-Neuve, Belgium.}
\email{jean-pierre.tignol@uclouvain.be}
\thanks{The first author was supported by the FWO Odysseus Programme
  (project \emph{Explicit Methods in Quadratic Form Theory}), funded by the
  Fonds Wetenschappelijk Onderzoek -- Vlaanderen. The third author
  acknowledges support from the Fonds de la Recherche
  Scientifique--FNRS under grant n$^\circ$~J.0014.15.} 
\begin{document}
\maketitle

A well-known theorem of Albert states that if a tensor product of two
quaternion division algebras $Q_1$, $Q_2$ over a field $F$ of
characteristic different from~$2$ is not a division algebra, then
there exists a quadratic extension $L$ of $F$ that embeds as a
subfield in $Q_1$ and in $Q_2$; see \cite[(16.29)]{BOI}. The same
property holds in characteristic~$2$, with the additional condition
that $L/F$ is separable: this was proved by Draxl \cite{Dxl}, and
several proofs have been proposed: see \cite[Th.~98.19]{EKM}, and
\cite{Tits} for a list of earlier references.

Our purpose in this note is to extend the Albert--Draxl Theorem by
substituting for the tensor product of two quaternion algebras the
corestriction of a single quaternion algebra over a quadratic
extension. Our main result is the following:

\begin{thm}
  \label{thm:main}
  Let $F$ be an arbitrary field and let $K$ be a quadratic \'etale
  $F$-algebra.\footnote{This means that $K$ is either a quadratic
    separable field extension of $F$, or $K\simeq F\times F$.} For
  every quaternion $K$-algebra $Q$, the following conditions are
  equivalent:
  \begin{enumerate}
  \item[(i)]
  $Q$ contains a quadratic $F$-algebra linearly disjoint from $K$;
  \item[(ii)]
  $Q$ contains a quadratic \'etale $F$-algebra linearly disjoint
  from~$K$;
  \item[(iii)]
  $\Cor_{K/F}Q$ is not a division algebra.
  \end{enumerate}
\end{thm}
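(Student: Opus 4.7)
The implication (ii)$\Rightarrow$(i) is trivial, so the content of the theorem lies in (i)$\Rightarrow$(iii) and (iii)$\Rightarrow$(ii).

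For (i)$\Rightarrow$(iii), I start with a quadratic $F$-subalgebra $E\subset Q$ linearly disjoint from $K$, so that $E\otimes_F K$ sits in $Q$ as a $2$-dimensional $K$-subalgebra. If $E\otimes_F K$ contains an idempotent other than $0,1$, or a non-zero nilpotent, then so does $Q$, forcing $Q\simeq M_2(K)$ and $\Cor_{K/F}Q\simeq M_4(F)$, which is not a division algebra. Otherwise $E\otimes_F K$ is a maximal subfield of $Q$. When it is separable over $K$, I present $Q$ as a cyclic $K$-algebra with \'etale slot $E$ and invoke a projection formula for the corestriction: the first slot being defined over $F$ allows $\Cor_{K/F}Q$ to be computed as a cyclic $F$-algebra with the same slot and scalar $N_{K/F}$ of the second, so $\Cor_{K/F}Q$ is Brauer-equivalent to a quaternion $F$-algebra; being of degree $4$, it is then a matrix algebra over a quaternion, hence not division. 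In the remaining case — $E\otimes_F K$ a purely inseparable field extension, which occurs only in characteristic $2$ — I use the analogous inseparable presentation of $Q$ and compute the corestriction directly to reach the same conclusion.

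The decisive direction (iii)$\Rightarrow$(ii) should proceed via a transfer argument, which I take to be the technical core of the paper. The algebra $\Cor_{K/F}Q$ inherits from the canonical involution of $Q$ an $F$-involution of the first kind, so it is a biquaternion $F$-algebra, and its failure to be a division algebra is detected by the isotropy of an Albert form $q_{\mathrm{Alb}}$. I expect an earlier result in the paper to identify $q_{\mathrm{Alb}}$, up to an explicit Witt equivalence, with a Scharlau transfer $s_*(\varphi)$ of a form $\varphi$ derived from the reduced norm $n_Q$ via a suitable $F$-linear map $s:K\to F$. Frobenius reciprocity for transfers then says $s_*(\varphi)$ is isotropic precisely when $\varphi$ represents an element interacting non-trivially with $K$ via $s$; unpacking this produces an element $q\in Q$ whose minimal polynomial over $F$ has coefficients in $F$ and whose trace and norm conditions ensure that $F[q]$ is a quadratic $F$-subalgebra of $Q$ linearly disjoint from $K$.

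The main obstacle I anticipate is to guarantee that the subalgebra $F[q]$ so produced is \'etale — not purely inseparable and not of the form $F[\varepsilon]/(\varepsilon^2)$. In characteristic different from $2$ this is automatic from the non-triviality of the minimal polynomial. In characteristic $2$ the transfer argument a priori only produces \emph{some} quadratic $F$-subalgebra, and one must use the additional freedom in choosing $s$ and $q$ — or perturb $q$ by elements drawn from the $K$-subalgebra it generates — to ensure separability of the minimal polynomial. Managing this uniformly across all characteristics is, I believe, the reason the paper develops a tailored transfer theory for quadratic forms and quaternion algebras over quadratic \'etale extensions.
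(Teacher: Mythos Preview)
Your broad architecture is right, and your treatment of (i)$\Rightarrow$(iii) matches the paper's (your case analysis on $E\otimes_FK$ unfolds exactly into the paper's two presentations $Q=(LK/K,b)$ with $L/F$ \'etale or $Q=(M/K,b)$ with $b\in F^\times$, both handled by the projection formula). But your sketch of (iii)$\Rightarrow$(ii) mislocates the key technical input.

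The step you describe as ``Frobenius reciprocity for transfers then says $s_*(\varphi)$ is isotropic precisely when $\varphi$ represents an element interacting non-trivially with $K$'' is not what is happening, and it would not suffice. Isotropy of $s_*(n_Q)$ is automatic and carries no information: since $n_Q(1)=1$ and $s(1)=0$, the vector $1\in Q$ is already isotropic for $s_*(n_Q)$. The hypothesis that $\Cor_{K/F}Q$ is not a division algebra enters only by pushing the Witt index of $s_*(n_Q)$ up to at least~$2$: one hyperbolic plane was free, and the second comes from isotropy of the $6$-dimensional Albert form sitting inside $s_*(n_Q)$. Extracting a quadratic $F$-subalgebra from the condition $\Wind\bigl(s_*(n_Q)\bigr)\ge 2$ is the genuine technical content, and it is supplied not by Frobenius reciprocity but by Theorem~\ref{P:}: a nonsingular form over $K$ whose transfer has Witt index $m$ contains the scalar extension of a nondegenerate $m$-dimensional form $\psi$ defined over $F$. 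Applying this with $m=2$ yields a nonsingular binary $\psi$ over $F$ with $\psi_K$ a subform of $n_Q$, and the desired \'etale $F$-algebra is the discriminant extension $L$ of $\psi$; one then argues that $KL$ splits $n_Q$ and hence embeds in $Q$.

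Your anticipated obstacle---guaranteeing \'etale-ness of $F[q]$ by a characteristic-$2$ perturbation---does not arise in this route. Since $Q$ is a division algebra, $\psi_K$ is anisotropic, which forces the nonsingular binary form $\psi$ to be similar to the norm form of a \emph{separable} quadratic field extension $L/F$, uniformly in all characteristics. The paper does offer a second proof (\S\ref{sec:Albert}) closer in spirit to your ``find an element $q$'' idea, based on an explicit model of the Albert form; there a density argument is indeed needed to arrange $\Trd(q)\neq0$, but Frobenius reciprocity is again not the mechanism.
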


Note that when $K=F\times F$ the quaternion $K$-algebra $Q$ has the
form $Q_1\times Q_2$ for some quaternion $F$-algebras $Q_1$, $Q_2$,
and $\Cor_{K/F}Q=Q_1\otimes_FQ_2$. Thus, in this particular case
Theorem~\ref{thm:main} is equivalent to the Albert--Draxl Theorem. The
more general case is needed for the proof of the main result in
\cite{BGBT}. 

If the characteristic $\car F$ is different from~$2$,
Theorem~\ref{thm:main} is proved in \cite[(16.28)]{BOI}. The proof
below is close to that in \cite{BOI}, but it does not require any
restriction on the characteristic. The idea is to use a transfer
of the norm form $n_Q$ of $Q$ to obtain an Albert form of $\Cor_{K/F}Q$,
which allows us to substitute for~(iii) the condition that the
transfer of $n_Q$ has Witt index at least~$2$. To complete the
argument, we need to relate totally isotropic subspaces of the
transfer to subforms of $n_Q$ defined over $F$. This is slightly more
delicate in characteristic~$2$. Therefore, we first discuss the
transfer of quadratic forms in \S\ref{sec:trans}, and give the proof
of Theorem~\ref{thm:main} in \S\ref{sec:proof}. In the last section,
we sketch an alternative proof of Theorem~\ref{thm:main} based on a
proof of the Albert--Draxl Theorem due to Knus \cite{Knus}. This
alternative proof relies on an explicit construction of an Albert form
for the corestriction of a quaternion algebra.

\subsection*{Notations and Terminology}
\emph{Quaternion algebras} over an arbitrary field $F$ are
$F$-algebras obtained from an \'etale quadratic $F$-algebra $E$ and an
element $a\in F^\times$ by the following construction:
\[
(E/F,a)=E\oplus Ez
\]
with multiplication defined by the equations
\[
z^2=a\qquad\text{and}\qquad
z\ell=\iota(\ell)z\quad\text{for $\ell\in L$,}
\]
where $\iota$ is the nontrivial automorphism of $L$.

For quadratic and bilinear forms, we generally follow the notation and
terminology of \cite{EKM}. Thus, if $\varphi\colon V\to F$ is a
quadratic form on a (finite-dimensional) vector space $V$ over an
arbitrary field $F$, we let $\pol{\varphi}\colon V\times V\to F$
denote the polar form of $\varphi$, defined by
\[
\pol{\varphi}(x,y)=\varphi(x+y)-\varphi(x)-\varphi(y) \qquad\text{for
  $x$, $y\in V$.}
\]
We set
\begin{eqnarray*}\rad\pol\varphi & = & \{x\in V\mid
        \pol\varphi(x,y)=0\mbox{ for all } y\in V\}\\ 
\rad\varphi & = & \{x\in \rad\pol\varphi\mid \varphi(x)=0\}\,
\end{eqnarray*}
and observe that these sets are $F$-subspaces of $V$ with
$\rad\varphi\subseteq \rad\pol\varphi$.  
Moreover,
if $\car F\neq 2$ then $\varphi(x)=\frac{1}{2}\pol\varphi(x,x)$ for
all $x\in V$ and thus $\rad\varphi=\rad\pol\varphi$. 
We call the quadratic form $\varphi$
\emph{nonsingular}\footnote{Nonsingular quadratic forms are not
  defined in \cite{EKM}.} if
$\rad\pol\varphi=\{0\}$, \emph{regular} if $\rad\varphi=\{0\}$ and
\emph{nondegenerate} if $\varphi_K$ is regular for every field
extension $K/F$ or equivalently (by \cite[Lemma~7.16]{EKM}) if
$\varphi$ is regular and $\dim_F\rad\pol\varphi\leq 1$. 
Thus, every nonsingular form is nondegenerate and every nondegenerate
form is regular; moreover, all three conditions are equivalent in the
case where $\car F\neq 2$. 

The \emph{Witt index} of a quadratic form $\varphi$ on a vector space
$V$ is the dimension of the maximal totally isotropic subspaces of
$V$, i.e., the maximal subspaces $U\subseteq V$ such that
$\varphi(u)=0$ for all $u\in U$; see \cite[Prop.~8.11]{EKM}. We write
$\Wind(\varphi)$ for the Witt index of $\varphi$.

We will need the following easy observation:

\begin{lem}
  \label{L:L}
  Let $\varphi$ be a regular quadratic form on a vector space $V$. If
  $\varphi$ is isotropic, the isotropic vectors span $V$.
\end{lem}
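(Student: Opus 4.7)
The plan is to argue by contradiction. Let $W\subseteq V$ denote the $F$-linear span of the isotropic vectors of $\varphi$, assume $W\subsetneq V$, and pick $w\in V\setminus W$; automatically $\varphi(w)\neq 0$, since otherwise $w$ itself would be isotropic and hence in $W$. The workhorse is the expansion
\[
\varphi(v+tw)=\varphi(v)+t\,\pol{\varphi}(v,w)+t^2\varphi(w),
\]
valid for any $v\in V$ and $t\in F$. If $v$ is isotropic with $\pol{\varphi}(v,w)\neq 0$, then choosing $t=-\pol{\varphi}(v,w)/\varphi(w)$ produces an isotropic vector $v+tw\in W$; since $v\in W$ and $t\neq 0$, this forces $w\in W$, a contradiction. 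So the first step yields the subclaim that $\pol{\varphi}(v,w)=0$ for every isotropic $v$.

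Next I would use regularity to conclude. Pick a nonzero isotropic $v_0$, which exists by the isotropy hypothesis. The condition $\rad\varphi=\{0\}$ together with $\varphi(v_0)=0$ rules out $v_0\in\rad\pol{\varphi}$, so there is some $y\in V$ with $\pol{\varphi}(v_0,y)\neq 0$. Applying the subclaim to $y$ in place of $w$, $y$ cannot lie outside $W$, so $y\in W$. Finally, the element $w+y$ satisfies $\pol{\varphi}(v_0,w+y)=\pol{\varphi}(v_0,y)\neq 0$, so by the subclaim once more $w+y\in W$; combined with $y\in W$, this yields $w\in W$, the desired contradiction.

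The only delicate point is making sure the argument is characteristic-free. The quadratic-expansion trick of the first step uses nothing beyond the definition of the polar form and therefore holds in any characteristic. The second step uses \emph{regularity} rather than nonsingularity: it is precisely the hypothesis $\rad\varphi=\{0\}$ applied to the isotropic vector $v_0$ that produces the auxiliary $y$ needed to shift $w$ into $W$. In characteristic different from~$2$ the nonzero polar pairing would be available directly from $\rad\pol{\varphi}=\{0\}$, but the argument above works uniformly, which is exactly what subsequent applications of the lemma require.
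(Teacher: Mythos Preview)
Your proof is correct and follows essentially the same approach as the paper's: both use the polar expansion $\varphi(u+tv)$ to manufacture an isotropic vector, then a shift by an element already in the span to capture the remaining vectors. The only cosmetic difference is that the paper works directly (showing every $x$ with $\pol\varphi(v,x)\neq0$ lies in the span by correcting $x$ by a multiple of the isotropic $v$), whereas you phrase it as a contradiction and correct the isotropic $v$ by a multiple of the anisotropic $w$; the two computations are dual and the final shifting step is identical.
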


\begin{proof}
  Let $V_0\subseteq V$ be the subspace spanned by the isotropic
  vectors of $V$, and let $v\in V\setminus \{0\}$ be an isotropic
  vector. Since $\rad(\varphi)=\{0\}$ there exists $w\in V$ 
  such that $\pol\varphi(v,w)=1$. If $x\in V$ is such that
  $\pol\varphi(v,x)\neq0$, then the vector
  $x-\varphi(x)\pol\varphi(v,x)^{-1}v$ is isotropic, hence it belongs
  to $V_0$. It follows that $x\in V_0$ since $v\in V_0$. Thus, $V_0$
  contains all the vectors that are not orthogonal to $v$. In
  particular, it contains $w$. If $x\in V$ is orthogonal to $v$, then
  $\pol\varphi(v,x+w)=1$, hence $x+w\in V_0$, and therefore $x\in V_0$
  since $w\in V_0$. Thus, $V_0=V$.
\end{proof}

\section{Isotropic transfers}
\label{sec:trans}

Let $F$ be an arbitrary field and let $K$ be a quadratic field
extension of $F$. Fix a nonzero $F$-linear functional $s\colon K\to F$
such that $s(1)=0$. For every quadratic form $\varphi\colon V\to K$ on a
$K$-vector space, the transfer $s_*\varphi$ is the quadratic form on
$V$ (viewed as an $F$-vector space) defined by
\[
s_*\varphi(x)=s\bigl(\varphi(x)\bigr)\qquad\text{for $x\in V$.}
\]
If $\varphi$ is nonsingular, then $s_*\varphi$ is nonsingular: see
\cite[Lemma~20.4]{EKM}. For every quadratic form $\psi$ over $F$, we
let $\psi_K$ denote the quadratic form over $K$ obtained from $\psi$
by extending scalars to $K$.

The following result is well-known in characteristic different from
$2$, but it appears to be new in characteristic $2$. 

\begin{thm}\label{P:}
  Let $\varphi$ be a nonsingular quadratic form over $K$.
  Then there exists a nondegenerate quadratic form $\psi$ over $F$
  with $\dim \psi=\Wind(s_\ast\varphi)$ such that $\psi_K$ is a
  subform of $\varphi$. 
\end{thm}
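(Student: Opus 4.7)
Set $m:=\Wind(s_\ast\varphi)$. My plan is to construct an $F$-subspace $U_0\subseteq V$ of $F$-dimension $m$ that is $K$-linearly independent in $V$ (so that $\dim_K(KU_0)=m$), satisfies $\varphi(U_0)\subseteq F$, and on which $\varphi|_{U_0}$ is a nondegenerate $F$-form. Given such $U_0$, put $\psi:=\varphi|_{U_0}$: the $K$-span $KU_0\subseteq V$ realizes $\psi_K$ as a subform of $\varphi$, and $\dim\psi=m$. Note that $\dim\psi\leq m$ is automatic for any candidate $U_0$, since an $F$-basis of it lies in $\{u\in V:\varphi(u)\in F=\ker s\}$ and spans a totally isotropic $F$-subspace for $s_\ast\varphi$; the entire content is the existence of $U_0$ of dimension exactly $m$.

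I would argue by induction on $\dim_KV$. If $s_\ast\varphi$ is anisotropic ($m=0$), take $\psi=0$. Otherwise choose $v\in V\setminus\{0\}$ with $\varphi(v)\in F$. In Case~A where $\varphi(v)=0$, the nonsingularity of $\varphi$ yields $w\in V$ with $\pol{\varphi}(v,w)=1$; after replacing $w$ by $w-\varphi(w)v$ one arranges $\varphi(w)=0$, whence $Kv+Kw$ is a hyperbolic plane and $\varphi\cong\mathbb{H}_K\perp\varphi_1$ for some nonsingular $\varphi_1$ on the $\pol{\varphi}$-orthogonal complement. A direct computation gives $s_\ast\mathbb{H}_K\cong 2\mathbb{H}_F$, so $\Wind(s_\ast\varphi_1)=m-2$; the induction hypothesis supplies $\psi_1$ with $\psi_{1,K}\subseteq\varphi_1$, and $\psi:=\mathbb{H}_F\perp\psi_1$ meets the requirements.

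In Case~B where $\varphi(v)=a\in F^\times$, the characteristic matters. In $\car F\neq 2$, the line $Kv$ is $\pol{\varphi}$-nondegenerate and $\varphi\cong\langle a\rangle_K\perp\varphi_1$; since $s_\ast\langle a\rangle_K\cong\mathbb{H}_F$, one has $\Wind(s_\ast\varphi_1)=m-1$, and induction produces $\psi=\langle a\rangle_F\perp\psi_1$. In $\car F=2$ the polar form vanishes on $Kv$, so no one-dimensional orthogonal summand can be split off; the plan is instead to find a companion $w\in V$ satisfying $\pol{\varphi}(v,w)=1$ and $\varphi(w)=b\in F$. Such a pair $(v,w)$ is automatically $K$-linearly independent (else $\pol{\varphi}(v,w)$ would vanish), so $Fv+Fw$ is a $K$-linearly independent $F$-subspace with $\varphi$-values in $F$ and nondegenerate binary restriction $[a,b]_F$; since $Kv+Kw$ is $\pol{\varphi}$-nondegenerate we obtain $\varphi\cong[a,b]_K\perp\varphi_1$. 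A direct computation of $s_\ast[a,b]_K\cong 2\mathbb{H}_F$ (for $a,b\in F$) gives $\Wind(s_\ast\varphi_1)=m-2$, and the induction closes with $\psi:=[a,b]_F\perp\psi_1$.

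The main obstacle is the existence of such a companion $w$ in the characteristic~$2$ subcase. Starting from any $w_0\in V$ with $\pol{\varphi}(v,w_0)=1$ (which exists by nonsingularity of $\pol{\varphi}$), modifications $w=w_0+z$ with $z\in v^\perp:=\ker\pol{\varphi}(v,\cdot)$ preserve $\pol{\varphi}(v,w)=1$, while the condition $\varphi(w)\in F$ translates, after applying $s$, into the $F$-valued equation
\[
s_\ast\varphi(z)+\pol{s_\ast\varphi}(w_0,z)=s\bigl(\varphi(w_0)\bigr)
\]
on $v^\perp$. Solvability of this quadratic-plus-linear equation is the delicate point. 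I would attack it by applying Lemma~\ref{L:L} to the restriction $s_\ast\varphi|_{v^\perp}$, using that $v$ itself is an isotropic vector in $v^\perp$ (since $\pol{\varphi}(v,v)=0$ in characteristic~$2$ and $\varphi(v)\in F$), and exploiting the $F$-codimension-two nature of $v^\perp\subseteq V$ to generate enough flexibility to hit the prescribed element of $F$.
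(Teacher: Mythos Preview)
Your characteristic-$2$ Case~B plan has a genuine gap. You propose to apply Lemma~\ref{L:L} to $s_*\varphi|_{v^\perp}$ with $v^\perp=\ker\pol\varphi(v,\cdot)$, but this restriction is never regular: since $\pol\varphi(v,v)=0$ in characteristic~$2$ the vector $v$ lies in $v^\perp$, and for every $z\in v^\perp$ one has $\pol{s_*\varphi}(v,z)=s\bigl(\pol\varphi(v,z)\bigr)=0$; together with $s_*\varphi(v)=s(a)=0$ this places $v$ in $\rad\bigl(s_*\varphi|_{v^\perp}\bigr)$, so the hypothesis of the lemma fails. Even granting its conclusion, it is not clear how the fact that isotropic vectors span would let you solve the inhomogeneous equation $s_*\varphi(z)+\pol{s_*\varphi}(w_0,z)=s\bigl(\varphi(w_0)\bigr)$ for a prescribed right-hand side. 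You also do not treat $m=1$ in characteristic~$2$, where no nonsingular binary piece can be split off and one must simply take the one-dimensional (singular but nondegenerate) form $\varphi|_{Fv}$.

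The paper sidesteps all of this by first replacing $\varphi$ with its anisotropic part (so your Case~A never arises), handling $m=1$ by $\psi=\varphi|_{Fu}$, and for $m\geq2$ working in the $s_*\varphi$-orthogonal complement rather than the $\varphi$-orthogonal one. Choose $v$ with $\pol\varphi(u,v)=1$ and $\lambda\in K$ with $s(\lambda)=1$; then $U=Fu\oplus F(\lambda v)$ is a hyperbolic plane for $s_*\varphi$, and $W=U^{\perp}$ (orthogonal for $\pol{s_*\varphi}$) carries a \emph{nonsingular}, hence regular, restriction $s_*\varphi|_W$, which is isotropic because $m\geq2$. Now Lemma~\ref{L:L} legitimately applies on $W$ and yields an $s_*\varphi$-isotropic $w\in W$ with $\pol\varphi(u,w)\neq0$. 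Membership in $W$ forces $s\bigl(\pol\varphi(u,w)\bigr)=0$, so $\pol\varphi(u,w)\in F^\times$, and isotropy for $s_*\varphi$ gives $\varphi(w)\in F$. This $w$ is exactly the companion you were looking for, obtained uniformly with no case split on the characteristic.
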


\begin{proof}
  Substituting for $\varphi$ its anisotropic part, we may assume
  $\varphi$ is anisotropic. We may also assume
  $\Wind(s_*\varphi)\geq1$, for otherwise there is nothing to
  show. Pick an isotropic vector $u\in V$ for the form $s_*\varphi$;
  we thus have $\varphi(u)\in F$ and $\varphi(u)\neq0$ since $\varphi$
  is anisotropic. If $\Wind(s_*\varphi)=1$, then we may choose
  $\psi=\varphi\rvert_{Fv}$. For the rest of the proof, we assume
  $\Wind(s_*\varphi)\geq2$, and we argue by induction on
  $\Wind(s_*\varphi)$. 

  Since $\varphi$ is nonsingular, we may find $v\in V$ such that
  $\pol\varphi(u,v)=1$. Let $\lambda\in K$ be such that
  $s(\lambda)=1$. We have $s_*\varphi(u)=0$ and
  \[
  \pol{s_*\varphi}(u,\lambda v)=s\bigl(\pol\varphi(u,\lambda v)\bigr)
  = s(\lambda)=1,
  \]
  hence the restriction of $s_*\varphi$ to the $F$-subspace
  $U\subseteq V$ spanned by $u$ and $\lambda v$ is nonsingular and
  isotropic. Therefore, $s_*\varphi\rvert_U$ is hyperbolic. Let
  $W=U^\perp\subseteq V$ be the orthogonal complement of $U$ in $V$
  for the form $s_*\varphi$. Since $\Wind(s_*\varphi)\geq2$, the form
  $s_*\varphi\rvert_W$ is isotropic. By Lemma~\ref{L:L} we may find an
  isotropic vector $w\in W$ such that
  $\pol\varphi(u,w)\neq0$. However, $\pol{s_*\varphi}(u,w)=0$ since
  $w\in W=U^\perp$; therefore $\pol\varphi(u,w)\in F^\times$. Moreover
  $\varphi(w)\in F$ since $w$ is isotropic for
  $s_*\varphi$. Therefore, the restriction of $\varphi$ to the
  $F$-subspace of $V$ spanned by $u$ and $w$ is a quadratic form
  $\psi_1$ over $F$. 

  Observe that $u$ and $w$ are $K$-linearly independent: if $w=\alpha
  u$ with $\alpha\in K$, then
  \[
  \pol\varphi(u,w)=\alpha \pol\varphi(u,u) = 2\alpha\varphi(u).
  \]
  Since $\pol\varphi(u,w)\in F^\times$ and $\varphi(u)\in F$, it
  follows that $\alpha\in F$, which is impossible. Therefore,
  $\psi_{1K}$ is a $2$-dimensional subform of $\varphi$, and $\psi_1$
  is nonsingular because $\varphi$ is anisotropic. Since $s_*(\psi_{1K})$ is
  hyperbolic, for the orthogonal complement $\varphi'$ of $\psi_{1K}$
  in $\varphi$ we obtain $\Wind(s_*\varphi')=\Wind(s_*\varphi)-2$. The
  theorem follows by induction.
\end{proof}

\begin{rems}
  1. If $s_*\varphi$ is hyperbolic, then
  $\Wind(s_*\varphi)=\dim\varphi$, hence Theorem~\ref{P:} shows that
  $\varphi=\psi_K$ for some quadratic form $\psi$ over $F$. This
  particular case of Theorem~\ref{P:} is established
  in~\cite[Th.~34.9]{EKM}. 

  2. If $\car F=2$ and $\Wind(s_*\varphi)$ is odd, then the quadratic
  form $\psi$ in Theorem~\ref{P:} cannot be nonsingular, since
  nonsingular quadratic forms in characteristic~$2$ are
  even-dimensional; in particular, $\psi_K$ is not an orthogonal
  direct summand of $\varphi$. 

  3. If the extension $K/F$ is purely inseparable, then
  $\Wind(s_*\varphi)$ is necessarily even. This follows because the
  $K$-subspace spanned by each isotropic vector for $s_*\varphi$ is a
  $2$-dimensional $F$-subspace that is totally isotropic for
  $s_*\varphi$. 

  4. The analogue of Theorem~\ref{P:} for symmetric bilinear forms is
  proved in \cite[Prop.~34.1]{EKM}.
\end{rems}

\section{Proof of Theorem~\ref{thm:main}}
\label{sec:proof}

Let $F$ be an arbitrary field. As in \cite{EKM}, we write $I_q(F)$ for
the Witt group of nonsingular quadratic forms of even dimension over
$F$, and $I(F)$ for the ideal of even-dimensional forms in the Witt
ring $W(F)$ of nondegenerate symmetric bilinear forms over $F$, and we
let $I^n_q(F)=I^{n-1}(F)I_q(F)$ for $n\geq2$. Let also $\Br_2(F)$
denote the $2$-torsion subgroup of the Brauer group of $F$. Recall
from~\cite[Th.~14.3]{EKM} the group homomorphism
\[
e_2\colon I^2_q(F)\to \Br_2(F)
\]
defined by mapping the Witt class of a quadratic form $\varphi$ to the
Brauer class of its Clifford algebra.

\begin{lem}
  Let $K$ be a quadratic field extension of an arbitrary field $F$,
  and let $s\colon K\to F$ be a nonzero $F$-linear functional such
  that $s(1)=0$. The following diagram is commutative:
  \[
  \begin{CD}
    I^2_q(K) @>{s_*}>> I^2_q(F)\\
    @V{e_2}VV @VV{e_2}V\\
    \Br_2(K) @>{\Cor_{K/F}}>> \Br_2(F).
  \end{CD}
  \]
\end{lem}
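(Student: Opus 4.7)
The plan is to verify commutativity on a generating set of $I^2_q(K)$. Since $s_*$, $e_2$ and $\Cor_{K/F}$ are all group homomorphisms, both composites $I^2_q(K)\to \Br_2(F)$ are homomorphisms of abelian groups, so it suffices to check the identity on generators. A natural generating set is the collection of Witt classes of $2$-fold quadratic Pfister forms $\pi$ over $K$, since $I^2_q(K)=I(K)\cdot I_q(K)$ is generated as an abelian group by such products.

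For such a Pfister form $\pi$, the invariant $e_2([\pi])$ is the Brauer class of a quaternion $K$-algebra $Q$ whose reduced norm form is $\pi$. Writing $Q=(E/K,a)$ for a quadratic \'etale $K$-algebra $E$ and some $a\in K^\times$, and letting $n_E$ denote the norm form of $E/K$, one has $\pi \simeq \rho \otimes n_E$, where $\rho$ is the $1$-fold bilinear Pfister form attached to $a$. The identity to prove thus becomes
\[
\Cor_{K/F}([Q]) \;=\; e_2\bigl([s_*\pi]\bigr) \quad \text{in } \Br_2(F).
\]

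When $a\in F^\times$, two projection formulas conclude the argument. Frobenius reciprocity for Scharlau transfers yields $s_*\pi = \rho\otimes s_*(n_E)$, and applying $e_2$ produces a class that is the product of the Brauer symbol attached to $a$ with a class depending only on $s_*(n_E)$. On the Brauer side, the corestriction projection formula
\[
\Cor_{K/F}\bigl(\alpha \cdot \mathrm{res}_{K/F}(\beta)\bigr) \;=\; \Cor_{K/F}(\alpha)\cdot \beta
\]
produces the same Brauer class for $\Cor_{K/F}([(E/K,a)])$. Hence commutativity holds whenever $a\in F^\times$.

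The main obstacle is reducing a general $a\in K^\times$ to the case above. I would use additivity of the relevant symbols in each slot, combined with a decomposition of $a$ in a basis of $K$ over $F$, to rewrite an arbitrary $2$-fold Pfister class as a $\mathbb{Z}$-linear combination of classes whose ``bilinear parameter'' lies in $F^\times$. In characteristic~$2$ this step is the delicate one, since the quadratic and bilinear Pfister factors play asymmetric roles and one must track carefully which slot is being manipulated at each stage. Once this reduction is in place, the two projection formulas finish the argument; an alternative route is a direct cocycle-level construction exhibiting an explicit Brauer equivalence between the Clifford algebra of $s_*(n_Q)$ and $\Cor_{K/F}(Q)$, which avoids the case distinction altogether.
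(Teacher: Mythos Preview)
Your overall strategy matches the paper's: both composites are group homomorphisms, so it suffices to check commutativity on Pfister generators, and the verification goes via Frobenius reciprocity on the Witt side together with the cohomological projection formula on the Brauer side. The gap is exactly what you identify as the ``main obstacle,'' and your proposed fix does not work as stated. The bilinearity relations for $2$-fold Pfister classes are \emph{multiplicative} in each slot (for instance $[\langle\!\langle ab,c\,]\!]] = [\langle\!\langle a,c\,]\!]] + [\langle\!\langle b,c\,]\!]]$ in the appropriate quotient), not additive; so decomposing $a\in K^\times$ \emph{additively} in an $F$-basis of $K$ produces no usable identity among Witt classes. There is no evident way to write an arbitrary $a\in K^\times$ multiplicatively in terms of elements of $F^\times$, so this route stalls.

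The paper bypasses the obstacle entirely by choosing a better generating set. The key input is the structural identity
\[
I^2_q(K)\;=\;I(F)\,I_q(K)\;+\;I(K)\,I_q(F)
\]
from \cite[Lemma~34.16]{EKM}, which says that $I^2_q(K)$ is already generated by $2$-fold Pfister forms having one slot in~$F$---either the bilinear slot (your case $a\in F^\times$) or the quadratic slot (the case you do not treat). For generators of either type, Frobenius reciprocity \cite[Prop.~20.2]{EKM} applies directly, the transfer of the remaining $1$-fold Pfister factor is computed in \cite[Lemma~34.14, Cor.~34.19]{EKM}, and the projection formula \cite[Prop.~3.4.10]{GS} handles the Brauer side. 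No reduction from general $a\in K^\times$ is needed at all. What you are missing is precisely this generation result, which replaces your unresolved reduction step with a single citation.
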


Following the definition in \cite[Rem.~6.9.2]{GS},
$\Cor_{K/F}\colon\Br_2(K)\to\Br_2(F)$ is the zero map if $K$ is a
purely inseparable extension of $F$.

\begin{proof}
  We have $I^2_q(K)=I(F)I_q(K) + I(K)I_q(F)$ by
  \cite[Lemma~34.16]{EKM}, hence $I^2_q(K)$ is generated by Witt
  classes of $2$-fold Pfister forms that have a slot in
  $F$. Commutativity of the diagram follows by Frobenius reciprocity
  \cite[Prop.~20.2]{EKM}, the computation of transfers of $1$-fold
  Pfister forms in \cite[Lemma~34.14]{EKM} and \cite[Cor.~34.19]{EKM},
  and the projection formula in cohomology \cite[Prop.~3.4.10]{GS}.
\end{proof} 

\begin{proof}[Proof of Theorem~\ref{thm:main}]
  Since (ii)~$\Rightarrow$~(i) is clear, it suffices to prove
  (i)~$\Rightarrow$~(iii) and (iii)~$\Rightarrow$~(ii).

  If (i) holds, then we may represent $Q$ in the form $(LK/K,b)$ where
  $L$ is a quadratic \'etale $F$-algebra linearly disjoint from~$K$
  and $b\in K^\times$, or in the form $(M/K,b)$ where $M$ is a
  quadratic \'etale $K$-algebra and $b\in F^\times$. In each case the
  projection formula \cite[Prop.~3.4.10]{GS} shows that $\Cor_{K/F}Q$
  is Brauer-equivalent to a quaternion algebra, hence~(iii) holds.

  Now, assume~(iii) holds. If $Q$ is split, then it contains an
  $F$-algebra isomorphic to $F\times F$, so~(ii) holds. For the rest
  of the proof, we assume $Q$ is a division algebra. Let $n_Q$ be the
  norm form of $Q$, which is a $2$-fold Pfister quadratic form in
  $I^2_q(K)$ such that $e_2(n_Q)=Q$ in $\Br(K)$. Since $n_Q$
  represents~$1$, the transfer $s_*(n_Q)$ is isotropic, hence
  Witt-equivalent to a $6$-dimensional nonsingular quadratic form
  $\varphi$ in $I^2_q(F)$. This form satisfies
  $e_2(\varphi)=\Cor_{K/F}(Q)$ in $\Br(F)$, hence $\varphi$ is an
  Albert form of $\Cor_{K/F}(Q)$ as per the definition in
  \cite[(16.3)]{BOI}. In particular, since $\Cor_{K/F}(Q)$ is not a
  division algebra, $\varphi$ is isotropic by \cite[(16.5)]{BOI},
  and therefore $\Wind\bigl(s_*(n_Q)\bigr)\geq2$. By Theorem~\ref{P:}
  there exists a nonsingular quadratic form $\psi$ over $F$ with
  $\dim\psi=2$ such that $\psi_K$ is a subform of
  $n_Q$. Since $Q$ is a division algebra, we have that $\psi_K$
  is anisotropic, hence $\psi$ is similar to the norm form of a unique
  separable quadratic field extension $L/F$. The field $L$ is linearly
  disjoint from $K$ over $F$ because $\psi_K$ is anisotropic. On the
  other hand, $\psi_{KL}$ is hyperbolic, hence $KL$ splits the form
  $n_Q$, and it follows that there exists a $K$-algebra embedding of
  $KL$ in $Q$. Therefore,~(ii) holds. 
\end{proof}

If $K$ is a purely inseparable quadratic extension of $F$, all the
statements of Theorem~\ref{thm:main} hold for every quaternion algebra
over $K$. To see this, recall from the definition of $\Cor_{K/F}$ in
\cite[Rem.~6.9.2]{GS} that the corestriction of every quaternion
$K$-algebra is split. Moreover, if $Q=(M/K,b)$ with $M$ a separable
quadratic extension of $K$, then the separable closure of $F$ in $M$
is a separable quadratic extension of $F$ contained in $Q$ and
linearly disjoint from~$K$.

\section{The Albert form of a corestriction} 
\label{sec:Albert}

Let $Q$ be a quaternion algebra over a separable quadratic field
extension $K$ of an arbitrary field $F$. By definition (see
\cite[(16.3)]{BOI}), the Albert forms of $\Cor_{K/F}Q$ are the
$6$-dimensional nonsingular quadratic forms in $I^2_q(F)$ such that
$e_2(\varphi)=\Cor_{K/F}Q$ in $\Br_2(F)$; they are all similar. As
observed in the proof of Theorem~\ref{thm:main}, an Albert form of
$\Cor_{K/F}Q$ may be obtained from the Witt class of the
($8$-dimensional) transfer
$s_*(n_Q)$ of the norm form of $Q$ for an arbitrary nonzero $F$-linear
functional $s\colon K\to F$ such that $s(1)=0$. In this section, we
sketch a more explicit construction of an Albert form of
$\Cor_{K/F}Q$, inspired by Knus's proof of the Albert--Draxl Theorem
in \cite{Knus}, and we use it to give an alternative proof of
Theorem~\ref{thm:main}. 

We first recall the construction of the corestriction $\Cor_{K/F}Q$.
Let $\gamma$ be the nontrivial $F$-automorphism of $K$ and let
$^\gamma Q$ denote the conjugate quaternion algebra $^\gamma
Q=\{{}^\gamma x\mid x\in Q\}$ with the operations
\[
{}^\gamma x +{}^\gamma y={}^\gamma(x+y),\quad
{}^\gamma x \cdot{}^\gamma y= {}^\gamma(xy),\quad
\lambda\cdot{}^\gamma x = {}^\gamma(\gamma(\lambda)x)
\]
for $x$, $y\in Q$ and $\lambda\in K$. The algebra ${}^\gamma
Q\otimes_KQ$ carries a $\gamma$-semilinear automorphism $s$ defined by
\[
s({}^\gamma x\otimes y)={}^\gamma y\otimes x \qquad\text{for $x$,
  $y\in Q$.}
\]
By definition, the corestriction (or norm) $\Cor_{K/F}(Q)$ is the
$F$-algebra of fixed points (see \cite[(3.12)]{BOI}):
\[
\Cor_{K/F}(Q)=\bigl({}^\gamma Q\otimes_KQ)^s.
\]

Let $\Trd$ and $\Nrd$ denote the reduced trace and the reduced norm on
$Q$. Let also $\sigma$ be the canonical (conjugation) involution on
$Q$. Consider the following $K$-subspace of $^\gamma Q\otimes_KQ$:
\[
V=\{{}^\gamma x_1\otimes1-1\otimes x_2\mid x_1, x_2\in Q\text{ and }
\gamma\bigl(\Trd(x_1)\bigr)=\Trd(x_2)\}.
\]
This $K$-vector space has dimension~$6$ and is preserved by $s$, and
one can show that the $F$-space of $s$-invariant elements has the
following description, where $T_{K/F}\colon K\to F$ is the trace form:
\[
V^s=\{{}^\gamma y\otimes1+1\otimes y\mid y\in Q\text{ and }
T_{K/F}(\Trd(y))=0\}. 
\]
Now, pick an element $\kappa\in K^\times$ such that
$\gamma(\kappa)=-\kappa$. (If $\car F=2$ we may pick $\kappa=1$.) The
following formula defines a quadratic form $\varphi\colon V^s\to F$:
for $y\in Q$ such that $T_{K/F}(\Trd(y))=0$, let
\[
\varphi({}^\gamma y\otimes1+1\otimes y) = \kappa\cdot\bigl(
  \gamma(\Nrd(y))-\Nrd(y)\bigr).
\]
Nonsingularity of the
form $\varphi$ is easily checked after scalar extension to an
algebraic closure of $F$, and computation shows that the linear map
\[
f\colon V^s\to M_2\bigl(\Cor_{K/F}(Q)\bigr) \quad\text{given by}\quad
  \xi\mapsto
  \begin{pmatrix}
    0&\kappa \cdot(\sigma\otimes\id)(\xi)\\
    \xi&0
  \end{pmatrix}
\]
satisfies $f(\xi)^2=\varphi(\xi)$ for all $\xi\in V^s$. Therefore, $f$
induces an $F$-algebra homomorphism $f_*$ defined on the Clifford
algebra $C(V^s,\varphi)$. Dimension count shows that $f_*$ is an
isomorphism
\begin{equation}
  \label{eq:Clif}
  f_*\colon C(V^s,\varphi)\xrightarrow{\sim} M_2(\Cor_{K/F}Q).
\end{equation}
The restriction to the even Clifford algebra is an isomorphism
$C_0(V^s,\varphi) \simeq (\Cor_{K/F}Q)\times (\Cor_{K/F}Q)$, hence the
discriminant (or Arf invariant) of $\varphi$ is trivial. This means
$\varphi\in I^2_q(F)$, and \eqref{eq:Clif} shows that
$e_2(\varphi)=\Cor_{K/F}Q$ in $\Br_2(F)$, so $\varphi$ is an Albert
form of $\Cor_{K/F}Q$.
\medbreak

We use the Albert form $\varphi$ to sketch an alternative proof of
Theorem~\ref{thm:main}. Since all the conditions in
Theorem~\ref{thm:main} trivially hold if $Q$ is split, we may assume
$Q$ is a division algebra. In particular, the base field $F$ is infinite.

Suppose condition~(i) of Theorem~\ref{thm:main} holds. If $x\in
Q$ generates a quadratic $F$-algebra disjoint from $K$, then
$\Trd(x)\in F$ and $\Nrd(x)\in F$ (and $x\notin K$), hence
${}^\gamma(\kappa x)\otimes 1+1\otimes(\kappa x)\in V^s$ is an
isotropic vector of $\varphi$. Since $\varphi$ is an Albert form of
$\Cor_{K/F}Q$, it follows that $\Cor_{K/F}Q$ is not a division
algebra. Therefore, (i) implies (iii).

For the converse, suppose~(iii) holds, and let ${}^\gamma
y\otimes1+1\otimes y\in V^s$ be an isotropic vector for $\varphi$. Then
$\gamma(\Nrd(y))-\Nrd(y)=0$, hence $\Nrd(y)\in F$, and
$T_{K/F}(\Trd(y))=0$. Assuming $y\in K$ quickly leads to a
contradiction, and a density argument shows that we may find such an
element $y$ with $\Trd(y)\neq0$. Then $\kappa y\in Q$ satisfies
\[
\Trd(\kappa y)\in F,\qquad \Trd(\kappa y)\neq0, \quad\text{and}\quad
\Nrd(\kappa y)=\kappa^2\Nrd(y)\in F.
\]
Therefore, $\kappa y$ generates a quadratic \'etale $F$-subalgebra
of $Q$ linearly disjoint from $K$, proving that (ii) (hence also (i))
holds.  

\bibliographystyle{plain}

\begin{thebibliography}{9}

\bibitem{BGBT}
K.-J. Becher, N. Grenier-Boley, J.-P. Tignol, Involutions and stable
subalgebras, in preparation.

\bibitem{Dxl}
P. Draxl, \"Uber gemeinsame separabel-quadratische
Zerf\"allungsk\"orper von Quaternionenalgebren,
Nachr. Akad. Wiss. G\"ottingen Math.-Phys. Kl. II {\bf 1975}, no.~16,
251--259. 

\bibitem{EKM}
R.~Elman, N.~Karpenko, A.~Merkurjev.
\newblock \textit{The algebraic and geometric theory of quadratic forms}.
\newblock American Mathematical Society Colloquium Publications, {\bf 56}, Amer.~Math.~Soc., Providence, RI, 2008.

\bibitem{GS}
P. Gille\ and\ T. Szamuely, {\it Central simple algebras and Galois
  cohomology}, Cambridge Studies in Advanced Mathematics, 101,
Cambridge Univ. Press, Cambridge, 2006. 

\bibitem{Knus}
M.-A. Knus, Sur la forme d'Albert et le produit tensoriel de deux
alg\`ebres de quaternions, Bull. Soc. Math. Belg. S\'er. B {\bf 45}
(1993), no.~3, 333--337. 

\bibitem{BOI} M.-A. Knus, A. S. Merkurjev, M. Rost, and J.-P. Tignol.
  \newblock {\em The book of involutions}.  \newblock {\em American
    Mathematical Society Colloquium Publications} {\bf 44}. \newblock
  American Mathematical Society, Providence, RI, 1998.

%




\bibitem{Tits}
J. Tits, Sur les produits tensoriels de deux alg\`ebres de
quaternions, Bull. Soc. Math. Belg. S\'er. B {\bf 45} (1993), no.~3,
329--331. 

\end{thebibliography}

\end{document}